\providecommand{\U}[1]{\protect\rule{.1in}{.1in}}
\newtheorem{theorem}{Theorem}
\newtheorem{lemma}[theorem]{Lemma}
\newtheorem{proposition}[theorem]{Proposition}
\newtheorem{remark}[theorem]{Remark}
\newenvironment{proof}[1][Proof]{\noindent\textbf{#1.} }{\ \rule{0.5em}{0.5em}}
\begin{document}

\title{EULER SUMS OF HYPERHARMONIC NUMBERS}
\author{\textbf{Ayhan Dil}\\Department of Mathematics,\\Akdeniz University, 07058 Antalya Turkey\\e-mail: adil@akdeniz.edu.tr \\\textbf{Khristo N. Boyadzhiev}\\Department of Mathematics and Statistics,\\Ohio Northern University Ada, \ Ohio 45810, USA,\\e-mail: k-boyadzhiev@onu.edu}
\date{}
\maketitle

\begin{abstract}
The hyperharmonic numbers $h_{n}^{\left(  r\right)  }$ are defined by means of
the classical harmonic numbers. We show that the Euler-type sums with
hyperharmonic numbers:%
\[
\sigma\left(  r,m\right)  =%
{\displaystyle\sum\limits_{n=1}^{\infty}}
\frac{h_{n}^{\left(  r\right)  }}{n^{m}}%
\]
can be expressed in terms of series of Hurwitz zeta function values. This is a
generalization of a result of Mez\H{o} and Dil. We also provide an explicit
evaluation of $\sigma\left(  r,m\right)  $ in a closed form in terms of zeta
values and Stirling numbers of the first kind. Furthermore, we evaluate
several other series involving hyperharmonic numbers.\newline\textbf{2010
Mathematics Subject Classification:} 11B73, 11M99\newline\textbf{Keywords and
Phrases:} Riemann zeta function, Hurwitz zeta function, Euler sums, harmonic
and hyperharmonic numbers, Stirling numbers, Beta function.

\end{abstract}

\section{Introduction}

In this paper we are interested in Euler-type sums with hyperharmonic numbers
$\sigma\left(  r,m\right)  $ (see definitions $\left(  \ref{i0}\right)  $\ and
$\left(  \ref{i1}\right)  $\ below). Such series could be of interest in
analytic number theory. We will show that these sums are related to the values
of the Riemann zeta function. In \cite{MD}\ the authors considered the case
$r=1.$\ Here we extend this result to $r>1$.

In the second section we express $\sigma\left(  r,m\right)  $ as a special
series involving zeta values. In the third section we evaluate $\sigma\left(
r,m\right)  $ as a finite sum including Stirling numbers of the first kind,
zeta values, and values of the digamma (psi) function.

In the last fourth section we use certain integral representaions to evaluate
several series with hyperharmonic numbers. For example,%
\[
\sum_{n=1}^{\infty}\frac{h_{n}^{\left(  r\right)  }}{n\left(  n+1\right)
\ldots\left(  n+r\right)  }=\frac{\pi^{2}}{6r!}%
\]
and%
\[
\sum_{n=1}^{\infty}h_{n}^{\left(  r\right)  }B\left(  r+1,n+1\right)  =1.
\]

\subsection{Definitions and notation}

The $n$-th harmonic number is defined by the $n$-th partial sum of the
harmonic series%
\begin{equation}
H_{n}:=\sum_{k=1}^{n}\frac{1}{k}\text{ \ \ }\left(  n\in\mathbb{N}:=\left\{
1,2,3,\ldots\right\}  \right)  , \label{i0}%
\end{equation}

where the empty sum $H_{0}$ is conventionally understood to be zero.

Starting with $h_{n}^{(0)}=\frac{1}{n}$ $\left(  n\in\mathbb{N}\right)  ,$ the
$nth$ hyperharmonic number $h_{n}^{(r)}$ of order $r$ is defined by (see
\cite{CG}, see also \cite{MD}):%

\begin{equation}
h_{n}^{(r)}:=\sum_{k=1}^{n}h_{k}^{(r-1)}\text{ \ \ }\left(  r\in
\mathbb{N}\right)  . \label{i1}%
\end{equation}
It is easy to see that $h_{n}^{(1)}:=H_{n}$ $\left(  n\in\mathbb{N}\right)  $.

These numbers can be expressed in terms of binomial coefficients and ordinary
harmonic numbers (see \cite{CG, MD}):%
\begin{equation}
h_{n}^{(r)}=\binom{n+r-1}{r-1}(H_{n+r-1}-H_{r-1}). \label{i2}%
\end{equation}

The well-known generating functions of the harmonic and hyperharmonic numbers
are given as%
\begin{equation}
\sum_{n=1}^{\infty}H_{n}x^{n}=-\frac{\ln\left(  1-x\right)  }{1-x} \label{i3}%
\end{equation}
and%
\begin{equation}
\sum_{n=1}^{\infty}h_{n}^{\left(  r\right)  }x^{n}=-\frac{\ln\left(
1-x\right)  }{\left(  1-x\right)  ^{r}}. \label{i4}%
\end{equation}

Euler discovered the following formula (see, e.g., \cite{FS, RS})):%

\begin{equation}
2\text{ }\zeta_{H}\left(  m\right)  =\left(  m+2\right)  \zeta\left(
m+1\right)  -\sum_{n=1}^{m-2}\zeta\left(  m-n\right)  \zeta\left(  n+1\right)
\text{\ \ \ }\left(  m\in\mathbb{N}\backslash\left\{  1\right\}  \right)  ,
\label{1}%
\end{equation}
where $\zeta_{H}\left(  m\right)  =\sum_{n=1}^{\infty}\frac{H_{n}}{n^{m}}$ and
$\zeta\left(  s\right)  $ is the Riemann zeta function, and, throughout this
paper the empty sum understood to be nil .

For certain pairs of positive integers $r$ and $m,$ several authors have
evaluated the Euler sums%
\[
S\left(  r,m\right)  =\sum_{n=1}^{\infty}\frac{H_{n}^{\left(  r\right)  }%
}{n^{m}},
\]
where $H_{n}^{\left(  r\right)  }$ denotes the generalized harmonic numbers of
order $r$ defined by
\[
H_{n}^{\left(  r\right)  }=1+\frac{1}{2^{r}}+\cdots+\frac{1}{n^{r}}=\sum
_{k=1}^{n}\frac{1}{k^{r}}%
\]
\ (see \cite{BBG, FS} and for an elementary procedure \cite{BC}). In the
earlier works have been done with $S\left(  r,m\right)  $, in this paper, we
want to give a closed form of the following sum:%
\[
\sigma\left(  r,m\right)  =%
{\displaystyle\sum\limits_{n=1}^{\infty}}
\frac{h_{n}^{\left(  r\right)  }}{n^{m}}.
\]

\bigskip Using the relation $\left(  \ref{i2}\right)  ,$ Mez\H{o} and Dil
(\cite{MD}, Corollary 3) found that the series $\sigma\left(  r,m\right)  $
converges for $m>r$, i.e.,%
\[
\sigma\left(  r,m\right)  =\sum_{n=1}^{\infty}\frac{h_{n}^{\left(  r\right)
}}{n^{m}}<\infty\text{ \ \ }\left(  m>r\right)  .
\]
A rearrangement transforms $\sigma\left(  r,m\right)  $ into the following sum
(\cite{MD}, Theorem 4) :%
\begin{equation}
\sigma\left(  r,m\right)  =\sum_{n=1}^{\infty}h_{n}^{\left(  r-1\right)
}\zeta\left(  m,n\right)  \text{ \ \ }\left(  r\in\mathbb{N}\text{; }m\geq
r+1\right)  , \label{2}%
\end{equation}
where $\zeta\left(  s,a\right)  $ is the Hurwitz (or generalized) zeta
function defined by%

\[
\zeta\left(  s,a\right)  =\sum_{n=0}^{\infty}\frac{1}{\left(  n+a\right)
^{s}}\text{ \ \ }\left(  Re\left(  s\right)  >1\text{; }a\notin\mathbb{Z}%
_{0}^{-}\right)  ,
\]
where $\mathbb{Z}_{0}^{-}:=\left\{  0,-1,-2,...\right\}  .$

Using $\left(  \ref{1}\right)  $ and $\left(  \ref{2}\right)  ,$\ Mez\H{o} and
Dil (\cite{MD}, p. 364) obtained the following identity:%
\begin{equation}
2\sum_{k=1}^{\infty}\frac{\zeta\left(  m,k\right)  }{k}=\left(  m+2\right)
\zeta\left(  m+1\right)  -\sum_{n=1}^{m-2}\zeta\left(  m-n\right)
\zeta\left(  n+1\right)  \text{ \ \ }\left(  m\in\mathbb{N}\backslash\left\{
1\right\}  \right)  . \label{3}%
\end{equation}

\section{\textbf{Euler Sums of Hyperharmonic Numbers}}

The following theorem provides a general version of the Equation $\left(
\ref{2}\right)  $.

\begin{theorem}
\label{D}For $0\leq k<r<m$ ($k,r,m\in\mathbb{N}_{0}:=\mathbb{N\cup}\left\{
0\right\}  $), we have%
\begin{equation}
\sigma\left(  r,m\right)  =\sum_{n=1}^{\infty}\frac{h_{n}^{\left(  r\right)
}}{n^{m}}=\sum_{n=1}^{\infty}h_{n}^{\left(  r-k-1\right)  }\sum_{n\leq
i_{1}\leq i_{2}\leq\cdots\leq i_{k}<\infty}\zeta\left(  m,i_{k}\right)  ,
\label{9}%
\end{equation}
where%
\begin{equation}
\sum_{n\leq i_{1}\leq i_{2}\leq\cdots\leq i_{k}<\infty}\zeta\left(
m,i_{k}\right)  =\sum_{i_{1}=n}^{\infty}\sum_{i_{2}=i_{1}}^{\infty}\ldots
\sum_{i_{k}=i_{k-1}}^{\infty}\zeta\left(  m,i_{k}\right)  . \label{10}%
\end{equation}

\end{theorem}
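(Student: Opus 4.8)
The plan is to prove $(\ref{9})$ by induction on $k$, using $(\ref{2})$ as the base case and the recurrence $(\ref{i1})$ as the engine of the inductive step.

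For the \textbf{base case} $k=0$, the inner (empty) iterated sum is understood to be $\zeta(m,n)$ itself, so $(\ref{9})$ reduces to $\sigma(r,m)=\sum_{n=1}^{\infty}h_n^{(r-1)}\zeta(m,n)$, which is exactly $(\ref{2})$. (One should note that $(\ref{2})$ was stated for $r\in\mathbb N$; the case $r=0$ in the theorem, where $h_n^{(0)}=1/n$, is $\sigma(0,m)=\sum 1/n^m=\zeta(m)$, consistent since $\sum_{n\le i_1\le\cdots}$ with $k=r$ just piles up to $\zeta(m,n)$ summed down to... here it is cleanest to treat $r\ge 1$ and let $0\le k\le r-1$.)

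For the \textbf{inductive step}, assume $(\ref{9})$ holds for some $k$ with $k<r-1$, i.e.
\[
\sigma(r,m)=\sum_{n=1}^{\infty}h_n^{(r-k-1)}\,T_k(n),\qquad
T_k(n):=\sum_{n\le i_1\le\cdots\le i_k<\infty}\zeta(m,i_k).
\]
The idea is to expand $h_n^{(r-k-1)}=\sum_{j=1}^{n}h_j^{(r-k-2)}$ from $(\ref{i1})$ (valid since $r-k-1\ge 1$), substitute, and interchange the order of summation:
\[
\sigma(r,m)=\sum_{n=1}^{\infty}T_k(n)\sum_{j=1}^{n}h_j^{(r-k-2)}
=\sum_{j=1}^{\infty}h_j^{(r-k-2)}\sum_{n=j}^{\infty}T_k(n).
\]
Now $\sum_{n=j}^{\infty}T_k(n)=\sum_{n=j}^{\infty}\sum_{n\le i_1\le\cdots\le i_k}\zeta(m,i_k)=\sum_{j\le n\le i_1\le\cdots\le i_k}\zeta(m,i_k)=T_{k+1}(j)$, which is precisely the $(k+1)$-fold iterated sum in $(\ref{10})$ (with the outer index renamed). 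Relabeling $j$ as $n$ gives $(\ref{9})$ with $k$ replaced by $k+1$.

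\textbf{The main obstacle} is justifying the interchange of summations (and, implicitly, that all the iterated series $T_k(n)$ converge). For convergence: $\zeta(m,i_k)\sim i_k^{1-m}/(m-1)$, so each successive summation raises the exponent decay by $1$; after $k\le r-1$ summations the innermost tail behaves like $i^{1-m+k}\le i^{-2}$ since $m\ge r+1\ge k+2$, so every $T_k(n)$ converges and in fact $T_k(n)=O(n^{k+1-m})$. Since all terms are positive, Tonelli's theorem licenses every rearrangement and interchange above without further estimates; it remains only to record this positivity remark once at the start of the proof. I would also state explicitly at the outset that the iterated sum with $k=0$ denotes $\zeta(m,n)$, so that the induction is anchored correctly.
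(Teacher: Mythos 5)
Your proof is correct and takes essentially the same route as the paper: the paper starts from $(\ref{2})$, expands $h_n^{(r-1)}$ via $(\ref{i1})$, interchanges the order of summation to get $(\ref{12})$, and iterates "$k$ steps," which is exactly your induction on $k$, with your added (and welcome) justification of the interchanges by positivity/Tonelli. One cosmetic slip: under the hypothesis $m\geq k+2$ the estimate gives $T_k(n)=O\left(n^{k+1-m}\right)$ with exponent at most $-1$ (not necessarily $-2$), but this does not affect the argument since nonnegativity alone licenses the rearrangements and the outer series still converges.
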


\begin{proof}
From $\left(  \ref{2}\right)  $\ we have%
\begin{equation}
\sigma\left(  r,m\right)  =\sum_{n=1}^{\infty}\frac{h_{n}^{\left(  r\right)
}}{n^{m}}=\sum_{n=1}^{\infty}\sum_{j=1}^{n}h_{j}^{\left(  r-2\right)  }%
\zeta\left(  m,n\right)  . \label{11}%
\end{equation}
A rearrangement of $\left(  \ref{11}\right)  $ gives%
\begin{equation}
\sigma\left(  r,m\right)  =\sum_{n=1}^{\infty}h_{n}^{\left(  r-2\right)  }%
\sum_{i_{1}=n}^{\infty}\zeta\left(  m,i_{1}\right)  . \label{12}%
\end{equation}
Using a similar argument, after $k$-steps we obtain the desired result.
\end{proof}

\begin{remark}
Let us consider two special cases of $\left(  \ref{9}\right)  .$ To express
the Euler sums of hyperharmonic numbers $\sigma\left(  r,m\right)  $ in terms
of the multiple sums of Hurwitz zeta function we set $k=r-1$ to have%
\begin{equation}
\sigma\left(  r,m\right)  =\sum_{n=1}^{\infty}\frac{\sum_{n\leq i_{1}\leq
i_{2}\leq\cdots\leq i_{r-1}<\infty}\zeta\left(  m,i_{r-1}\right)  }{n}.
\label{c1}%
\end{equation}
Also the case $k=r-2$ gives%
\begin{equation}
\sigma\left(  r,m\right)  =\sum_{n=1}^{\infty}H_{n}\sum_{n\leq i_{1}\leq
i_{2}\leq\cdots\leq i_{r-2}<\infty}\zeta\left(  m,i_{r-2}\right)  , \label{c2}%
\end{equation}
which is a representation of $\sigma\left(  r,m\right)  $ in terms of harmonic
numbers and the multiple sums of Hurwitz zeta function.
\end{remark}

\section{A closed form of $\sigma\left(  r,m\right)  $}

We shall present a closed form evaluation of the following sum:%
\[
\sigma\left(  r,m\right)  =%
{\displaystyle\sum\limits_{n=1}^{\infty}}
\frac{h_{n}^{\left(  r\right)  }}{n^{m}}\text{ , where }m\geq r+1.
\]
In the next theorem we use a known closed form evaluation of the following
series recalled in Lemma $3$ below:%
\[
\mu\left(  m,r\right)  =%
{\displaystyle\sum\limits_{n=1}^{\infty}}
\frac{1}{n^{m}\left(  n+r\right)  }%
\]
and the unsigned Stirling numbers of the first kind $%
\genfrac{[}{]}{0pt}{}{n}{k}%
.$

\begin{lemma}
( \cite{BE}) For every positive integer $m$ and every $r>0$
\begin{equation}
\mu\left(  m,r\right)  =\sum_{k=1}^{m-1}\frac{\left(  -1\right)  ^{k-1}%
\zeta\left(  m-k+1\right)  }{r^{k}}+\frac{\left(  -1\right)  ^{m-1}}{r^{m}%
}\left(  \Psi\left(  r+1\right)  +\gamma\right)  , \label{L}%
\end{equation}
where $\Psi\left(  s\right)  $ is the Psi (or digamma) function defined by
$\Psi\left(  s\right)  =\frac{\Gamma^{^{\prime}}\left(  s\right)  }%
{\Gamma\left(  s\right)  }$\ and $\gamma=-\Psi\left(  1\right)  $ is the
Euler-Mascheroni constant.
\end{lemma}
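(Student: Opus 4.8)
The plan is to reduce the series to elementary pieces by partial fractions. First I would establish, by induction on $m$, the decomposition
\[
\frac{1}{n^{m}\left(  n+r\right)  }=\sum_{k=1}^{m-1}\frac{\left(  -1\right)
^{k-1}}{r^{k}\,n^{m-k+1}}+\frac{\left(  -1\right)  ^{m-1}}{r^{m}}\left(
\frac{1}{n}-\frac{1}{n+r}\right)  ,
\]
where for $m=1$ the $k$-sum is empty and the identity is just the familiar
$\frac{1}{n\left(  n+r\right)  }=\frac{1}{r}\left(  \frac{1}{n}-\frac
{1}{n+r}\right)  $. The inductive step follows by multiplying that familiar
identity by $1/n^{m-1}$: this peels off the term $\frac{1}{r\,n^{m}}$ and leaves
$-\frac{1}{r}\cdot\frac{1}{n^{m-1}\left(  n+r\right)  }$, to which the induction
hypothesis applies; a one-line reindexing then recombines everything into the
displayed form.

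Next I would sum the decomposition over $n\geq1$. Each of the finitely many
fractions $\frac{\left(  -1\right)  ^{k-1}}{r^{k}n^{m-k+1}}$ with $1\leq k\leq
m-1$ has exponent $m-k+1\geq2$, so summing it yields $\frac{\left(  -1\right)
^{k-1}\zeta\left(  m-k+1\right)  }{r^{k}}$, and collectively these contribute
exactly the finite sum in $\left(  \ref{L}\right)  $. For the remaining piece I
would treat $\sum_{n=1}^{\infty}\left(  \frac{1}{n}-\frac{1}{n+r}\right)  $ as a
limit of partial sums (the two harmonic-type series individually diverge, so the
grouping must be preserved) and invoke the classical series representation of the
digamma function, $\Psi\left(  r+1\right)  +\gamma=\sum_{n=1}^{\infty}\left(
\frac{1}{n}-\frac{1}{n+r}\right)  $, valid for every $r>0$; this reproduces the
last term $\frac{\left(  -1\right)  ^{m-1}}{r^{m}}\left(  \Psi\left(
r+1\right)  +\gamma\right)  $. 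When $r$ is a positive integer the same quantity
is the harmonic number $H_{r}$, which telescopes directly.

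The only genuinely delicate point is the interchange of summation with the
term-by-term evaluation: the decomposition has a conditionally convergent tail,
so I would either keep the last two fractions grouped together before summing (as
written above) or observe that $\frac{1}{n^{m}\left(  n+r\right)  }=O\left(
n^{-m-1}\right)  $ makes the original series absolutely convergent, then sum the
finite decomposition up to $N$ and pass to the limit $N\to\infty$ at the end.
Everything else is routine bookkeeping of signs and indices, so I expect no
further obstruction.
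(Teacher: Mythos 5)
Your argument is correct, and it is worth noting that the paper itself offers no proof of this lemma at all: it is simply quoted from the reference \cite{BE}, so your write-up supplies a self-contained elementary derivation where the paper relies on a citation. The partial fraction identity
\[
\frac{1}{n^{m}\left(  n+r\right)  }=\sum_{k=1}^{m-1}\frac{\left(  -1\right)^{k-1}}{r^{k}\,n^{m-k+1}}+\frac{\left(  -1\right)  ^{m-1}}{r^{m}}\left(\frac{1}{n}-\frac{1}{n+r}\right)
\]
does hold (the induction step you describe, multiplying $\frac{1}{n(n+r)}=\frac{1}{r}\bigl(\frac{1}{n}-\frac{1}{n+r}\bigr)$ by $n^{-(m-1)}$ and reindexing, checks out, with the $m=1$ base case being the empty-sum form of the lemma), each term with $1\leq k\leq m-1$ sums to $\zeta(m-k+1)$ since $m-k+1\geq 2$, and the grouped tail is exactly the classical series $\Psi(r+1)+\gamma=\sum_{n\geq 1}\bigl(\frac{1}{n}-\frac{1}{n+r}\bigr)$, valid for all real $r>0$ as the lemma requires, not only integer $r$. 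Your handling of the only delicate point is also sound: since the left-hand side is $O(n^{-m-1})$, summing the decomposition over $n\leq N$ and letting $N\to\infty$ while keeping the two divergent harmonic-type pieces grouped is a legitimate way to justify the term-by-term evaluation. I see no gap.
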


\begin{theorem}
\label{B}For $r,m\in\mathbb{N}$ with $r<m,$ we have
\begin{align}
\sigma\left(  r,m\right)   &  =\frac{1}{\left(  r-1\right)  !}%
{\displaystyle\sum\limits_{k=1}^{r}}
\genfrac{[}{]}{0pt}{}{r}{k}%
\nonumber\\
&  .\left\{  \zeta_{H}\left(  m-k+1\right)  -H_{r-1}\zeta\left(  m-k+1\right)
+%
{\displaystyle\sum\limits_{j=1}^{r-1}}
\mu\left(  m-k+1,j\right)  \right\}  . \label{cc3}%
\end{align}

\end{theorem}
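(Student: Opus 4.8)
The plan is to start from the binomial expression $(\ref{i2})$ for $h_n^{(r)}$, namely $h_n^{(r)}=\binom{n+r-1}{r-1}(H_{n+r-1}-H_{r-1})$, and to expand the falling/rising factorial $\binom{n+r-1}{r-1}=\frac{1}{(r-1)!}(n+1)(n+2)\cdots(n+r-1)$ using the unsigned Stirling numbers of the first kind. Recall that $x(x+1)\cdots(x+r-1)=\sum_{k=1}^{r}\genfrac{[}{]}{0pt}{}{r}{k}x^{k}$; applying this with $x=n$ gives $(n+1)\cdots(n+r-1)=\frac{1}{n}\sum_{k=1}^{r}\genfrac{[}{]}{0pt}{}{r}{k}n^{k}$, so that $\binom{n+r-1}{r-1}=\frac{1}{(r-1)!\,n}\sum_{k=1}^{r}\genfrac{[}{]}{0pt}{}{r}{k}n^{k}$. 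Substituting this into $\sigma(r,m)=\sum_{n\ge1}h_n^{(r)}/n^m$ yields
\[
\sigma(r,m)=\frac{1}{(r-1)!}\sum_{k=1}^{r}\genfrac{[}{]}{0pt}{}{r}{k}\sum_{n=1}^{\infty}\frac{H_{n+r-1}-H_{r-1}}{n^{m-k+1}}.
\]
The convergence hypothesis $m\ge r+1$ guarantees that for each $k\le r$ the exponent $m-k+1\ge 2$, so every inner series converges absolutely and the interchange of the finite $k$-sum with the infinite $n$-sum is legitimate.

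Next I would handle the inner sum $\sum_{n\ge1}(H_{n+r-1}-H_{r-1})/n^{s}$ with $s=m-k+1$. Split the numerator using $H_{n+r-1}=H_n+\sum_{j=1}^{r-1}\frac{1}{n+j}$. This breaks the inner sum into three pieces:
\[
\sum_{n=1}^{\infty}\frac{H_n}{n^{s}}=\zeta_H(s),\qquad -H_{r-1}\sum_{n=1}^{\infty}\frac{1}{n^{s}}=-H_{r-1}\zeta(s),\qquad \sum_{j=1}^{r-1}\sum_{n=1}^{\infty}\frac{1}{n^{s}(n+j)}=\sum_{j=1}^{r-1}\mu(s,j).
\]
Collecting these with $s=m-k+1$ gives exactly the bracketed expression $\zeta_H(m-k+1)-H_{r-1}\zeta(m-k+1)+\sum_{j=1}^{r-1}\mu(m-k+1,j)$, and summing against $\frac{1}{(r-1)!}\genfrac{[}{]}{0pt}{}{r}{k}$ over $k$ produces $(\ref{cc3})$. (Lemma 3 is not strictly needed for the identity as stated, since $\mu$ appears unexpanded; it is invoked only if one wants the fully explicit form in terms of $\zeta$-values and $\Psi$.)

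The main thing to be careful about is the identity $H_{n+r-1}-H_{r-1}=H_n+\sum_{j=1}^{r-1}\frac{1}{n+j}$: one should verify it by writing $H_{n+r-1}=\sum_{i=1}^{n+r-1}\frac1i$ and $H_{r-1}=\sum_{i=1}^{r-1}\frac1i$, so the difference is $\sum_{i=r}^{n+r-1}\frac1i=\sum_{n'=1}^{n}\frac{1}{n'+r-1}$, and then re-index — a cleaner route is simply $H_{n+r-1}-H_{r-1}=\sum_{i=1}^{n}\frac{1}{r-1+i}$, from which one separates the $i$ that would give denominator $\le n$. In any case the re-indexing is the one routine point that must be pinned down exactly so that the ranges $k=1,\dots,r$ and $j=1,\dots,r-1$ come out right; everything else is absolute convergence bookkeeping, guaranteed by $m>r$. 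No single step is a genuine obstacle; the only real work is this careful partial-fraction/harmonic-number rearrangement.
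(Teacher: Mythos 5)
Your proposal is correct and follows essentially the same route as the paper: expand the binomial coefficient in $h_n^{(r)}=\binom{n+r-1}{r-1}(H_{n+r-1}-H_{r-1})$ via the unsigned Stirling numbers of the first kind, split $H_{n+r-1}-H_{r-1}$ as $H_n-H_{r-1}+\sum_{j=1}^{r-1}\frac{1}{n+j}$, and sum termwise to obtain $\zeta_H$, $\zeta$, and $\mu$ (the paper phrases the expansion through $h_n^{(r+1)}$ and shifts indices, which is only cosmetic, and your observation that Lemma 3 is not needed for the identity as stated is accurate). Note only the small slip in your closing paragraph, where the displayed identity omits the $-H_{r-1}$ term that your main derivation correctly retains.
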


\begin{proof}
Using the well-known expansion:%
\[
x\left(  x+1\right)  \ldots\left(  x+n-1\right)  =%
{\displaystyle\sum\limits_{k=0}^{n}}
\genfrac{[}{]}{0pt}{}{n}{k}%
x^{k},
\]
we get%
\[
\binom{n+r}{r}=\frac{1}{r!}%
{\displaystyle\sum\limits_{k=1}^{r+1}}
\genfrac{[}{]}{0pt}{}{r+1}{k}%
n^{k-1}.
\]
Hence,%
\begin{align*}
h_{n}^{\left(  r+1\right)  }  &  =\frac{1}{r!}%
{\displaystyle\sum\limits_{k=1}^{r+1}}
\genfrac{[}{]}{0pt}{}{r+1}{k}%
n^{k-1}\left(  H_{n+r}-H_{r}\right) \\
&  =\frac{1}{r!}%
{\displaystyle\sum\limits_{k=1}^{r+1}}
\genfrac{[}{]}{0pt}{}{r+1}{k}%
n^{k-1}\left(  H_{n}+\frac{1}{n+1}+\cdots+\frac{1}{n+r}-H_{r}\right)  .
\end{align*}
Now we can write%
\[
\frac{h_{n}^{\left(  r\right)  }}{n^{m}}=\frac{1}{\left(  r-1\right)  !}%
{\displaystyle\sum\limits_{k=1}^{r}}
\genfrac{[}{]}{0pt}{}{r}{k}%
\left(  \frac{H_{n}}{n^{m-k+1}}-\frac{H_{r-1}}{n^{m-k+1}}+%
{\displaystyle\sum\limits_{j=1}^{r-1}}
\frac{1}{n^{m-k+1}\left(  n+j\right)  }\right)  .
\]
Therefore,%
\begin{align*}
\sum\limits_{n=1}^{\infty}\frac{h_{n}^{\left(  r\right)  }}{n^{m}}  &
=\frac{1}{\left(  r-1\right)  !}%
{\displaystyle\sum\limits_{k=1}^{r}}
\genfrac{[}{]}{0pt}{}{r}{k}%
\left\{
{\displaystyle\sum\limits_{n=1}^{\infty}}
\frac{H_{n}}{n^{m-k+1}}\right. \\
&  \left.  -H_{r-1}%
{\displaystyle\sum\limits_{n=1}^{\infty}}
\frac{1}{n^{m-k+1}}+%
{\displaystyle\sum\limits_{j=1}^{r-1}}
{\displaystyle\sum\limits_{n=1}^{\infty}}
\frac{1}{n^{m-k+1}\left(  n+j\right)  }\right\}  .
\end{align*}
In view of $\left(  \ref{L}\right)  ,$ we obtain $\left(  \ref{cc3}\right)  .$
\end{proof}

Next we consider two particular cases $r=2$ and $3$ of $\sigma\left(
r,m\right)  .$

\textbf{Case }$r=2.$

By considering the case $r=2$ in $\left(  \ref{cc3}\right)  $\ we get%
\begin{equation}
\sigma\left(  2,m\right)  =\zeta_{H}\left(  m-1\right)  +\zeta_{H}\left(
m\right)  -\zeta\left(  m-1\right)  . \label{4}%
\end{equation}
With the aid of Theorem \ref{D}, equation $\left(  \ref{4}\right)  $ can be
written as%
\begin{equation}
\sigma\left(  2,m\right)  =\sum_{n=1}^{\infty}H_{n}\zeta\left(  m,n\right)
=\zeta_{H}\left(  m-1\right)  +\zeta_{H}\left(  m\right)  -\zeta\left(
m-1\right)  . \label{4+}%
\end{equation}

Setting $m=3$ in $\left(  \ref{4+}\right)  $ we obtain%
\begin{equation}
\sigma\left(  2,3\right)  =\sum_{n=1}^{\infty}H_{n}\zeta\left(  3,n\right)
=2\zeta\left(  3\right)  +\frac{5}{4}\zeta\left(  4\right)  -\zeta\left(
2\right)  \label{5}%
\end{equation}
and for $m=4$ we have%
\begin{equation}
\sigma\left(  2,4\right)  =\sum_{n=1}^{\infty}H_{n}\zeta\left(  4,n\right)
=\frac{5}{4}\zeta\left(  4\right)  +3\zeta\left(  5\right)  -\zeta\left(
2\right)  \zeta\left(  3\right)  -\zeta\left(  3\right)  \label{6}%
\end{equation}
and so on.

\textbf{Case }$r=3.$

By $\left(  \ref{9}\right)  $ and $\left(  \ref{cc3}\right)  $ we have%

\begin{align*}
\sigma\left(  3,m\right)   &  =\sum_{n=1}^{\infty}h_{n}^{\left(  2\right)
}\zeta\left(  m,n\right) \\
&  =\frac{1}{2}\zeta_{H}\left(  m-2\right)  +\frac{3}{2}\zeta_{H}\left(
m-1\right)  +\zeta_{H}\left(  m\right)  -\frac{5}{4}\zeta\left(  m-1\right)
-\frac{3}{4}\zeta\left(  m-2\right)
\end{align*}
Setting $m=4$ in the above equation\ we obtain%
\begin{equation}
\sigma\left(  3,4\right)  =\sum_{n=1}^{\infty}h_{n}^{\left(  2\right)  }%
\zeta\left(  4,n\right)  =\frac{15}{6}\zeta\left(  4\right)  +\zeta_{H}\left(
4\right)  -\frac{1}{4}\zeta\left(  3\right)  -\frac{3}{4}\zeta\left(
2\right)  . \label{8}%
\end{equation}

\section{Some series with hyperharmonic numbers}

In this section we evaluate some specific series involving hyperharmonic numbers.

\begin{proposition}
\label{P5}%
\begin{equation}
\sum_{n=1}^{\infty}\frac{h_{n}^{\left(  r\right)  }}{\left(  n+1\right)
\left(  n+2\right)  \ldots\left(  n+r+1\right)  }=\frac{1}{r!}\text{ }\left(
r\in\mathbb{N}_{0}\right)  . \label{13}%
\end{equation}

\end{proposition}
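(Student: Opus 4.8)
The plan is to replace the denominator by a Beta integral and then invoke the generating function $(\ref{i4})$. First I would note that, since $B(r+1,n+1)=\Gamma(r+1)\Gamma(n+1)/\Gamma(n+r+2)=r!\,n!/(n+r+1)!$,
\[
\frac{1}{(n+1)(n+2)\cdots(n+r+1)}=\frac{n!}{(n+r+1)!}=\frac{1}{r!}\,B(r+1,n+1)=\frac{1}{r!}\int_{0}^{1}t^{r}(1-t)^{n}\,dt .
\]
Substituting this into the series and interchanging the order of summation and integration gives
\[
\sum_{n=1}^{\infty}\frac{h_{n}^{(r)}}{(n+1)(n+2)\cdots(n+r+1)}=\frac{1}{r!}\int_{0}^{1}t^{r}\left(\sum_{n=1}^{\infty}h_{n}^{(r)}(1-t)^{n}\right)dt .
\]

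Next I would evaluate the inner series. For $t\in(0,1)$ we have $0\le 1-t<1$, so $(\ref{i4})$ with $x=1-t$ applies and yields $\sum_{n=1}^{\infty}h_{n}^{(r)}(1-t)^{n}=-\ln\!\big(1-(1-t)\big)/\big(1-(1-t)\big)^{r}=-t^{-r}\ln t$. Hence the integrand simplifies to $t^{r}\cdot(-t^{-r}\ln t)=-\ln t$, and
\[
\sum_{n=1}^{\infty}\frac{h_{n}^{(r)}}{(n+1)(n+2)\cdots(n+r+1)}=\frac{1}{r!}\int_{0}^{1}(-\ln t)\,dt=\frac{1}{r!}\Big[t-t\ln t\Big]_{0}^{1}=\frac{1}{r!}.
\]
For $r=0$ this is just the telescoping sum $\sum_{n\ge 1}1/(n(n+1))=1$, which is a useful sanity check.

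The only point needing care is the justification of the term-by-term integration. Since $h_{n}^{(r)}>0$ and $t^{r}(1-t)^{n}\ge 0$ on $(0,1)$, the interchange is permitted by Tonelli's theorem (equivalently, the monotone convergence theorem), and the resulting integrand $-\ln t$ is integrable on $(0,1)$, so no delicate estimates are needed near the endpoints $t=0$ or $t=1$; the convergence of the original series is clear as well, since the $n$-th term is $O(n^{-2}\log n)$. I expect this interchange to be the main (and essentially only) obstacle; everything else is a one-line computation.
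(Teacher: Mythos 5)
Your proof is correct and follows essentially the same route as the paper: represent the denominator by the Beta integral, interchange sum and integral, and invoke the generating function $(\ref{i4})$, the only difference being the trivial change of variable $t\mapsto 1-t$ (the paper uses $\int_{0}^{1}t^{n}(1-t)^{r}\,dt$ and ends with $-\int_{0}^{1}\ln(1-t)\,dt=1$). Your added justification of the interchange via Tonelli and the $r=0$ sanity check are fine but not a different method.
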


\begin{proof}
Using the formula (see \cite{GR})%
\begin{equation}
\frac{1}{r!}\int_{0}^{1}t^{n}\left(  1-t\right)  ^{r}dt=\frac{1}{\left(
n+1\right)  \left(  n+2\right)  \ldots\left(  n+r+1\right)  }, \label{14}%
\end{equation}
we can write%
\begin{equation}
\frac{1}{r!}\int_{0}^{1}h_{n}^{\left(  r\right)  }t^{n}\left(  1-t\right)
^{r}dt=\frac{h_{n}^{\left(  r\right)  }}{\left(  n+1\right)  \left(
n+2\right)  \ldots\left(  n+r+1\right)  }. \label{15}%
\end{equation}
With the help of $\left(  \ref{i4}\right)  ,$ we get%
\begin{equation}
\sum_{n=1}^{\infty}\frac{h_{n}^{\left(  r\right)  }}{\left(  n+1\right)
\left(  n+2\right)  \ldots\left(  n+r+1\right)  }=-\frac{1}{r!}\int_{0}^{1}%
\ln\left(  1-t\right)  dt. \label{16}%
\end{equation}
This equation completes the proof, since%
\begin{equation}
\int_{0}^{1}\ln\left(  1-t\right)  dt=-1. \label{17}%
\end{equation}

\end{proof}

\begin{proposition}
\label{P6}%
\begin{equation}
\sum_{n=1}^{\infty}\frac{h_{n}^{\left(  r\right)  }}{n\left(  n+1\right)
\ldots\left(  n+r\right)  }=\frac{\pi^{2}}{6r!} \label{18}%
\end{equation}

\end{proposition}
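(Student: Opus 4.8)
The plan is to follow the pattern of Proposition~\ref{P5}, but with a Beta integral in which the exponent of $t$ is lowered by one. From the standard value of the Beta function,
\[
\frac{1}{r!}\int_{0}^{1}t^{n-1}\left(1-t\right)^{r}dt=\frac{B\left(n,r+1\right)}{r!}=\frac{\left(n-1\right)!}{\left(n+r\right)!}=\frac{1}{n\left(n+1\right)\ldots\left(n+r\right)},
\]
so multiplying by $h_{n}^{\left(r\right)}$ and summing over $n\geq1$ turns the target series into
\[
\sum_{n=1}^{\infty}\frac{h_{n}^{\left(r\right)}}{n\left(n+1\right)\ldots\left(n+r\right)}=\frac{1}{r!}\int_{0}^{1}\left(\sum_{n=1}^{\infty}h_{n}^{\left(r\right)}t^{n-1}\right)\left(1-t\right)^{r}dt,
\]
provided the interchange of summation and integration is legitimate.

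Next I would evaluate the inner series using the generating function $\left(\ref{i4}\right)$: dividing $\sum_{n\geq1}h_{n}^{\left(r\right)}t^{n}=-\ln\left(1-t\right)/\left(1-t\right)^{r}$ by $t$ gives $\sum_{n\geq1}h_{n}^{\left(r\right)}t^{n-1}=-\ln\left(1-t\right)/\left(t\left(1-t\right)^{r}\right)$. Substituting this, the factor $\left(1-t\right)^{r}$ cancels and the expression collapses to
\[
\sum_{n=1}^{\infty}\frac{h_{n}^{\left(r\right)}}{n\left(n+1\right)\ldots\left(n+r\right)}=-\frac{1}{r!}\int_{0}^{1}\frac{\ln\left(1-t\right)}{t}dt.
\]
The remaining integral is the classical dilogarithm value: expanding $-\ln\left(1-t\right)=\sum_{k\geq1}t^{k}/k$ and integrating term by term gives $\int_{0}^{1}\left(-\ln\left(1-t\right)/t\right)dt=\sum_{k\geq1}1/k^{2}=\zeta\left(2\right)=\pi^{2}/6$, which yields $\pi^{2}/\left(6r!\right)$, as claimed.

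The only step that requires real care is the term-by-term integration. On the open interval $\left(0,1\right)$ every hyperharmonic number $h_{n}^{\left(r\right)}$ is positive and $t^{n-1}\left(1-t\right)^{r}>0$, so Tonelli's theorem applies and the interchange is justified; the common value is finite because the integrand $-\ln\left(1-t\right)/t$ obtained after the cancellation has only an integrable logarithmic singularity at $t=1$ and a removable one at $t=0$. One could alternatively bound the partial sums of $\sum h_{n}^{\left(r\right)}t^{n-1}$ via $\left(\ref{i4}\right)$ and pass to the limit by dominated convergence, since $-\ln\left(1-t\right)/t$ is integrable on $\left(0,1\right)$. Thus the main difficulty is merely this bookkeeping; the heart of the argument is the cancellation of $\left(1-t\right)^{r}$, which reduces the whole family of series, for every $r$, to a single dilogarithm integral.
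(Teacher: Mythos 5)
Your proposal is correct and follows essentially the same route as the paper: the Beta-type integral with $t^{n-1}(1-t)^{r}$ (the paper writes it as $(14)$ with an extra $dt/t$), the generating function $(\ref{i4})$ so that $(1-t)^{r}$ cancels, and the evaluation of $-\int_{0}^{1}\ln(1-t)\,dt/t=\pi^{2}/6$. The added justification of the interchange via Tonelli is a fine supplement but does not change the argument.
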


\begin{proof}
In the same way as in the proof of Proposition \ref{P5}, using $\left(
\ref{14}\right)  ,$ we can write%
\begin{equation}
\frac{1}{r!}\int_{0}^{1}h_{n}^{\left(  r\right)  }t^{n}\left(  1-t\right)
^{r}\frac{dt}{t}=\frac{h_{n}^{\left(  r\right)  }}{n\left(  n+1\right)
\ldots\left(  n+r\right)  } \label{19}%
\end{equation}
from which it follows that%
\begin{equation}
\sum_{n=1}^{\infty}\frac{h_{n}^{\left(  r\right)  }}{n\left(  n+1\right)
\ldots\left(  n+r\right)  }=-\frac{1}{r!}\int_{0}^{1}\frac{\ln\left(
1-t\right)  }{t}dt. \label{20}%
\end{equation}
Now, using the following known formula (see \cite{GR}):%
\begin{equation}
\int_{0}^{1}\frac{\ln\left(  1-t\right)  }{t}dt=-\frac{\pi^{2}}{6}, \label{21}%
\end{equation}
we obtain $\left(  \ref{18}\right)  .$
\end{proof}

\begin{remark}
Our results in Propositions \ref{P5} and \ref{P6} are involved in the Beta
function $B\left(  x,y\right)  $ defined by (see \cite{PS}):%
\begin{equation}
B\left(  x,y\right)  =\int_{0}^{1}t^{x-1}\left(  1-t\right)  ^{y-1}dt,
\label{22}%
\end{equation}
where $\operatorname{Re}\left(  x\right)  >0$ and $\operatorname{Re}\left(
y\right)  >0.$ In view of the following relation:%
\[
B\left(  r+1,n+1\right)  =\frac{\Gamma\left(  r+1\right)  \Gamma\left(
n+1\right)  }{\Gamma\left(  r+n+2\right)  }=\frac{r!n!}{\left(  r+n+1\right)
!}=\frac{r!}{\left(  n+1\right)  \left(  n+2\right)  \ldots\left(
n+r+1\right)  },
\]
the Equations $\left(  \ref{13}\right)  $ and $\left(  \ref{18}\right)
,$\ respectively, can be written in the following forms:%
\begin{equation}
\sum_{n=1}^{\infty}h_{n}^{\left(  r\right)  }B\left(  r+1,n+1\right)  =1
\label{23}%
\end{equation}
and%
\[
\sum_{n=1}^{\infty}h_{n}^{\left(  r\right)  }B\left(  r+1,n\right)  =\frac
{\pi^{2}}{6}.
\]

\end{remark}

At the end of this section we give two specific series associated with
harmonic numbers. Their proofs consist of routine manipulation with the
generating function $\left(  \ref{i4}\right)  .$ Therefore we omit proofs.

\begin{proposition}
The following equations hold.%
\[
\sum_{m=1}^{\infty}\frac{\left(  -1\right)  ^{m+1}\left(  2m+2n+3\right)
}{\left(  m+1\right)  \left(  m+2n+2\right)  }H_{m}=2\ln2\sum_{k=0}^{n}%
\frac{1}{2k+1}-\sum_{j=1}^{2n+1}\frac{1}{j}\sum_{k=1}^{j}\frac{\left(
-1\right)  ^{k-1}}{k},
\]
and%
\[
\sum_{m=1}^{\infty}\frac{\left(  -1\right)  ^{m+1}2n}{\left(  m+1\right)
\left(  m+2n+1\right)  }H_{m}=2\ln2\sum_{k=0}^{n-1}\frac{1}{2k+1}-\sum
_{j=1}^{2n}\frac{1}{j}\sum_{k=1}^{j}\frac{\left(  -1\right)  ^{k-1}}{k}.
\]

\end{proposition}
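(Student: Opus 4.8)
The plan is to reduce both identities, via partial fractions, to the single building-block evaluation
\[
\sum_{m=1}^{\infty}\frac{(-1)^{m+1}H_m}{m+a}=\int_0^1\frac{x^{a-1}\ln(1+x)}{1+x}\,dx\qquad(a\in\mathbb{N}),
\]
and then to the elementary integrals $\int_0^1x^k\ln(1+x)\,dx$. For the building block, replacing $x$ by $-x$ in the generating function $\sum_{m\ge1}H_mx^m=-\ln(1-x)/(1-x)$ (that is, $\left(\ref{i3}\right)$, equivalently $\left(\ref{i4}\right)$ with $r=1$) gives $\sum_{m\ge1}(-1)^{m+1}H_mx^m=\ln(1+x)/(1+x)$ on $(-1,1)$. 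Multiplying by $x^{a-1}$ and integrating term by term over $[0,y]$ for $0<y<1$, where convergence is uniform, yields $\int_0^y x^{a-1}\frac{\ln(1+x)}{1+x}\,dx=\sum_{m\ge1}(-1)^{m+1}H_m\frac{y^{m+a}}{m+a}$. Letting $y\to1^{-}$: the left side is continuous there, and the series on the right converges at $y=1$ by Dirichlet's test (the sequence $H_m/(m+a)$ is eventually decreasing to $0$), so Abel's limit theorem delivers the formula.

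Next I would clear the rational coefficients by partial fractions,
\[
\frac{2m+2n+3}{(m+1)(m+2n+2)}=\frac1{m+1}+\frac1{m+2n+2},\qquad
\frac{2n}{(m+1)(m+2n+1)}=\frac1{m+1}-\frac1{m+2n+1},
\]
so that the building block turns the first series into $\int_0^1\frac{(1+x^{2n+1})\ln(1+x)}{1+x}\,dx$ and the second into $\int_0^1\frac{(1-x^{2n})\ln(1+x)}{1+x}\,dx$. The elementary factorizations $\frac{1+x^{2n+1}}{1+x}=\sum_{k=0}^{2n}(-1)^kx^k$ and $\frac{1-x^{2n}}{1+x}=\sum_{k=0}^{2n-1}(-1)^kx^k$ then reduce both expressions to finite linear combinations $\sum_k(-1)^k\int_0^1x^k\ln(1+x)\,dx$.

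Finally, integration by parts together with $\int_0^1\frac{x^{k+1}}{1+x}\,dx=(-1)^{k+1}\bigl(\ln2-\sum_{l=1}^{k+1}\frac{(-1)^{l-1}}{l}\bigr)$ gives
\[
\int_0^1 x^{k}\ln(1+x)\,dx=\frac{\bigl(1+(-1)^{k}\bigr)\ln2+(-1)^{k+1}\sum_{l=1}^{k+1}\tfrac{(-1)^{l-1}}{l}}{k+1}.
\]
Substituting this, the coefficient $(-1)^k\bigl(1+(-1)^k\bigr)$ annihilates the odd $k$ and equals $2$ for even $k=2j$, producing the term $2\ln2\sum_j\frac1{2j+1}$ (summed to $j=n$ in the first identity, to $j=n-1$ in the second), while the coefficient $(-1)^k(-1)^{k+1}=-1$ collects the rest into $-\sum_j\frac1j\sum_{k=1}^{j}\frac{(-1)^{k-1}}{k}$ with $j$ running to $2n+1$ (resp.\ $2n$). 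This is precisely the asserted closed form. The only delicate point is the endpoint interchange of summation and integration, since $\sum_m(-1)^{m+1}H_m$ diverges; once that is handled by the Abel-limit argument above, the remainder is bookkeeping with the parity of $k$.
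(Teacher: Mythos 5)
Your proof is correct and is essentially the argument the paper has in mind: the paper omits the proof, saying only that it is a routine manipulation with the generating function $\left(\ref{i4}\right)$, and your route---substituting $-x$ into $\sum_{m\geq1}H_{m}x^{m}=-\ln\left(1-x\right)/\left(1-x\right)$, splitting by partial fractions, and reducing to the elementary integrals $\int_{0}^{1}x^{k}\ln\left(1+x\right)dx$---supplies exactly those details. The partial-fraction identities, the evaluation of $\int_{0}^{1}\frac{x^{j}}{1+x}dx$, and the parity bookkeeping all check out, and your Abel-limit justification of the endpoint interchange (needed because $\sum\left(-1\right)^{m+1}H_{m}$ diverges) is a careful point the paper does not address.
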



{\small \noindent\textbf{Acknowledgment}. This work is supported by the
Akdeniz University Scientific Research Projects Unit and T\"{U}B\.{I}TAK
(Scientific and Technological Research Council of Turkey).}

\end{document}